\newtheorem{thm}{Theorem}[section]
\newtheorem{prp}[thm]{Proposition}
\theoremstyle{definition}
\newtheorem{dfn}[thm]{Definition}
\newtheorem{rem}[thm]{Remark}
\newtheorem{exa}[thm]{Example}
\def\LTs{\textnormal{LT}_\sigma}
\def\MT{\textnormal{MT}}
\def\Mat{\textnormal{Mat}}
\def\Supp{\textnormal{Supp}}
\def\DegLex{\texttt{DegLex} }
\def\Lex{\texttt{Lex} }
\def\MatA{\mathcal{A}}
\def\FF{\mathcal{F}}
\def\GG{\mathcal{G}}
\def\OO{\mathcal{O}}
\def\OOsI{\mathcal{O}_\sigma\{I\}}
\def\UU{\mathcal{U}}
\def\VV{\mathcal{V}}
\def\WW{\mathcal{W}}
\def\NN{\mathbb{N}}
\def\QQ{\mathbb{Q}}
\def\TT{\mathbb{T}}
\def\cocoa{\mbox{\rm
 C\kern-.13em o\kern-.07 em C\kern-.13em o\kern-.15em A}}
\def\apcocoa{\mbox{\rm
A\kern-0.13em p\kern -0.07em C\kern-.13em o\kern-.07 em C\kern-.13em
o\kern-.15em A}}
\begin{document}

\title{Computing Border Bases without using a Term Ordering}

\author{Stefan Kaspar}
\address{Fakult\"at f\"ur Informatik und Mathematik, Universit\"at
Passau, D-94030 Passau, Germany}
\email{Stefan.Kaspar@uni-passau.de}

\date{\today}

\begin{abstract}
Border bases, a generalization of Gr\"{o}bner bases, have actively been
researched during recent years due to their applicability to industrial
problems. In \cite{KKCompBBases} Kehrein and Kreuzer formulated the so called
\textit{Border Basis Algorithm}, an algorithm which allows the computation of
border bases that relate to a degree compatible term ordering. In this paper
we extend the original Border Basis Algorithm in such a way that also border
bases that do not relate to any term ordering can be computed by it.
\end{abstract}

\subjclass[2010]{13P10}

\maketitle

\section{Introduction}\label{SecIntroduction}
Let $P = K[x_1, \dots, x_n]$ be a polynomial ring over a field $K$ and $I
\subset P$ be a zero-dimensional ideal. Let $\OO = \{ t_1, \dots, t_m \}$ be a
set of terms that is closed under taking divisors. Such a set is called order
ideal and its border $\{ b_1, \dots, b_s \} = (x_1\OO \cup \ldots \cup x_n\OO)
\setminus \OO$ is denoted by $\partial \OO$. A set $\GG = \{ g_1, \dots, g_s \}
\subset P$ of polynomials is then called an $\OO$-border basis of $I$ if $g_j =
b_j - \sum _{i=1}^m c_{ji} t_i$ where $c_{ji} \in K, \GG$ generates $I$, and
$\OO$ is a $K$-vector space basis of $P/I$. Border bases have actively been
researched during recent years (see for instance
\cite{HKPPAppCompZDPolyIdeals}, \cite{KKCharOfBBases}, \cite{KKCompBBases},
\cite{KKRAlgViewBBases}, and \cite{MNewCritForNormFormAlgo}) which is due
to several reasons. One aspect is that border bases are a generalization of
Gr\"{o}bner bases. If we denote by $\OOsI$ the complement of the set of leading
terms of $I$ with respect to any term ordering $\sigma$ then there exists an
(unique) $\OOsI$-border basis of $I$ and this border basis also contains the
reduced $\sigma$-Gr\"{o}bner basis of $I$. In addition border bases not only
generalize Gr\"{o}bner bases but are also more suitable for computations that
deal with empirical polynomials, i.e. polynomials that were constructed from
measured data (cf. \cite{SNumPolAlg}). This advantage over Gr\"{o}bner basis
has even lead to industrial applications (cf. \cite{HKPPAppCompZDPolyIdeals}).

\smallskip
The natural question is how border bases can actually be computed. In
\cite{KKCompBBases} Kehrein and Kreuzer show how it is possible to compute any
$\OO$-border basis of an ideal if the order ideal $\OO$ is already known. The
downside of this approach is that it involves the computation of a Gr\"{o}bner
basis of the input ideal and that $\OO$ has to be be specified explicitly. In
\cite{MNewCritForNormFormAlgo} Mourrain proposed a generic framework for
computing quotient bases and generators of an ideal. This framework has been
refined by Kehrein and Kreuzer in \cite{KKCompBBases}. There, the so called
\textit{Border Basis Algorithm} is formulated which allows the explicit
computation of $\OOsI$-border bases where $\sigma$ is a degree compatible term
ordering (cf. \cite[Proposition 18]{KKCompBBases}). This algorithm is the basis
for the work in this paper. We deduce an extension of the Border Basis
Algorithm which allows us to compute more general border bases that do not rely
on a term ordering. To do so we shortly revisit the necessary theoretical
background about border bases at the beginning of section
\ref{SecBasicDefsAndRes}. Then we collect some results from \cite{KKCompBBases}
which we need to formulate our Border Basis Algorithm generalization and give
an example of a border basis that cannot be computed by the Border Basis
Algorithm of Kehrein and Kreuzer. In section \ref{SecBBAWithTMS} we first
formalize the concept of marking polynomials. This concept allows us to speak
about the marked term of a polynomial and replaces the usage of a degree
compatible term ordering in the original Border Basis Algorithm. We then
reformulate those parts of the original Border Basis Algorithm that need to be
adapted to the concept of marked polynomials. Finally we propose a new Border
Basis Algorithm that does not use a degree compatible term ordering for the
computation of a border basis. We show that it really is an extension of the
original Border Basis Algorithm and analyze its properties. In the final
section \ref{SecRelToOtherAlgos} we put our new Border Basis Algorithm into
context with the approaches to compute border bases of Mourrain and
Tr\'{e}buchet in \cite{MTGenNormFormAndPolySysSolv} and Braun and Pokutta in
\cite{BPBBasesAndOIPolyhedralChar}.

\smallskip
\textbf{Acknowledgement.} The author would like to thank Prof. Dr. Martin
Kreuzer of the Department of Informatics and Mathematics, Universit\"{a}t
Passau, for fruitful discussions and helpful suggestions with respect to this
paper.

\bigskip

\section{Basic Definitions and Results}\label{SecBasicDefsAndRes}
In the following let $K$ be a field, $n \geq 1$, $P =
K[x_1, \dots, x_n]$, and $\TT^n = \{ x_1^{\alpha_1} \cdots x_n^{\alpha_n}\ |\
\alpha_i \geq 0, 1 \leq i \leq n \}$. Let us recall the definition of a border
basis and some basic results of border basis theory as stated in
\cite[Section 6.4]{KRBook2}.

\begin{dfn}
A set $\OO \subseteq \TT^n$ is called \textbf{order ideal} if $t' \in \OO$
and $t|t'$ imply $t \in \OO$ where $t, t' \in \TT^n$. The set $\partial \OO
= (x_1\OO \cup \ldots \cup x_n\OO) \setminus \OO$ is called the
\textbf{border of $\OO$}. We let $\partial \emptyset = \{ 1 \}$.
\end{dfn}

Please note that in the following we always consider finite order ideals $\OO$
only.

\begin{dfn}\label{DfnOBB}
Let $\OO = \{ t_1, \dots, t_\mu \} \subset \TT^n$ be an order ideal,
$\partial \OO = \{ b_1, \dots, b_\nu \}$,\linebreak
$G = \{ g_1, \dots, g_m \} \subset P$, and $I \subseteq P$ be
an ideal.
\begin{enumerate}
  \item[a)] The set $G$ is called \textbf{$\OO$-border prebasis} if
  \[ g_j = b_j - \sum_{i = 1}^\mu \alpha_{ij} t_i \]
  where $\alpha_{ij} \in K$ for $1 \leq i \leq \mu$ and
  $1 \leq j \leq \nu$.
  \item[b)] An $\OO$-border prebasis $G$ is called
  \textbf{$\OO$-border basis of $I$} if
  $G \subset I$ and one of the following equivalent conditions is
  satisfied:
  \begin{enumerate}
    \item[(i)] $\overline{\OO} = \{ \overline{t_1}, \dots,
    \overline{t_\mu} \}$ is a $K$-vector space basis of $P/I$.
    \item[(ii)] $I \cap \langle \OO \rangle_K = \{ 0 \}$.
    \item[(iii)] $P = I \oplus \langle \OO \rangle_K$.
  \end{enumerate}
  In this case $I$ is necessarily a zero-dimensional ideal.
\end{enumerate}
\end{dfn}

\begin{prp}\label{PrpExAndUniqOfBB}
Let $\OO = \{ t_1, \dots, t_\mu \} \subset \TT^n$ be an order ideal,
$I \subseteq P$ be a zero-dimensional ideal and assume that
$\overline{\OO}$ is a $K$-vector space basis of $P/I$.
\begin{enumerate}
  \item[a)] There exists a unique $\OO$-border basis of $I$.
  \item[b)] If $G$ is an $\OO$-border prebasis and $G \subset I$
  then $G$ is the $\OO$-border basis of $I$.
  \item[c)] In the setting of b), $I$ is generated by $G$.
\end{enumerate}
\end{prp}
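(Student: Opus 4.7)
The plan is to prove the three parts in order, leveraging the equivalent characterizations in Definition \ref{DfnOBB}(b). For existence in (a), I would use the hypothesis that $\overline{\OO}$ is a $K$-basis of $P/I$: every border term $b_j$ admits a unique expansion $\overline{b_j} = \sum_{i=1}^\mu \alpha_{ij} \overline{t_i}$, so setting $g_j := b_j - \sum_{i=1}^\mu \alpha_{ij} t_i$ produces elements of $I$ forming an $\OO$-border prebasis $G \subset I$. Since condition (i) of Definition \ref{DfnOBB}(b) holds by hypothesis, $G$ is an $\OO$-border basis. For uniqueness, let $G, G'$ be two $\OO$-border bases of $I$. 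By the equivalence (i) $\Leftrightarrow$ (ii) in Definition \ref{DfnOBB}(b), we have $I \cap \langle \OO \rangle_K = \{0\}$; since $g_j - g'_j = \sum_i (\alpha'_{ij} - \alpha_{ij}) t_i$ lies in $I \cap \langle \OO \rangle_K$, all coefficients vanish and $G = G'$. Part (b) is then immediate: by hypothesis the prebasis $G \subset I$ satisfies condition (i), hence is an $\OO$-border basis by definition, and by (a) it is the unique one.

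For (c) I would set $J := \langle G \rangle \subseteq I$ and first establish $P = J + \langle \OO \rangle_K$ as $K$-vector spaces. The key step is the stronger claim $\TT^n \subseteq J + \langle \OO \rangle_K$, proved by induction on the degree of $t \in \TT^n$. If $t \in \OO$ there is nothing to show, and for the base case $t = 1$ either $\OO = \emptyset$ (so $\partial\OO = \{1\}$ and $1 = g_1 \in J$) or $1 \in \OO$ by the order ideal property. Otherwise write $t = x_k s$ for some index $k$ and a term $s$ of strictly smaller degree; by induction $s = j + \sum_i \gamma_i t_i$ with $j \in J$ and $\gamma_i \in K$, and multiplying by $x_k$ yields $t = x_k j + \sum_i \gamma_i x_k t_i$, where $x_k j \in J$ and each $x_k t_i$ lies in $x_k\OO \subseteq \OO \cup \partial \OO$. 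Terms in $\OO$ belong to $\langle \OO \rangle_K$ directly, while any $x_k t_i \in \partial \OO$ equals some $b_\ell = g_\ell + \sum_{i'} \alpha_{i'\ell} t_{i'} \in J + \langle \OO \rangle_K$.

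With $P = J + \langle \OO \rangle_K$ established, the conclusion is quick: for any $f \in I$, write $f = j + h$ with $j \in J$ and $h \in \langle \OO \rangle_K$; then $h = f - j \in I \cap \langle \OO \rangle_K = \{0\}$, forcing $f = j \in J$ and hence $I = \langle G \rangle$. The main obstacle is the inductive decomposition of terms in (c); once one commits to inducting on degree the computation is routine, but the case split between $x_k t_i \in \OO$ and $x_k t_i \in \partial \OO$, together with handling the $t = 1$ base case, must be carried out explicitly to ensure the argument does not implicitly assume more than the order ideal structure.
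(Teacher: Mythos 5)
The paper does not prove this proposition at all: it simply cites it from \cite[Section 6.4]{KRBook2}, so there is no in-paper argument to compare yours against. Your proof is correct and is the standard one — existence and uniqueness in a) via the direct-sum decomposition and the vanishing of $I\cap\langle\OO\rangle_K$, b) as an immediate consequence, and c) via the inductive reduction of an arbitrary term along the order-ideal structure (including the $\OO=\emptyset$ and $t=1$ edge cases, which you handle correctly) to obtain $P=\langle G\rangle + \langle\OO\rangle_K$ and then $I=\langle G\rangle$; this matches the treatment in Kreuzer--Robbiano.
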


The following definition and proposition are taken from \cite{KKCompBBases}.
They will be used to state and derive the results given in in section
\ref{SecBBAWithTMS}.

\begin{dfn}
Let $F \subset P$ and $V$ be a $K$-vector subspace of $P$.
\begin{enumerate}
  \item[a)] $F^+ = F \cup x_1 F \cup \dots \cup x_n F$.
  \item[b)] $V^+ = V + x_1 V + \dots + x_n V$.
\end{enumerate}
\end{dfn}

\begin{prp}\label{PrpExistenceOfBB}
Let $\UU$ be a $K$-vector subspace of $P$. Let $\VV$ be a $K$-vector
subspace of a zero-dimensional ideal $I \subseteq P$ such that $\VV^+
\cap\ \UU = \VV$ and $\langle \VV \rangle = I$. Let\linebreak
$\OO = \{ t_1, \dots, t_\mu \} \subset \TT^n$ be an order ideal which
satisfies
\[  \UU = \VV \oplus \langle \OO \rangle_K. \]
If $\partial \OO \subseteq \UU$ then there exists an $\OO$-border basis of
$I$.
\end{prp}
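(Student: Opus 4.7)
The plan is to exhibit a specific $\OO$-border prebasis $G\subseteq I$ and then reduce, via Proposition~\ref{PrpExAndUniqOfBB}(b), to showing that $\overline{\OO}$ is a $K$-basis of $P/I$. Since $\partial\OO\subseteq\UU=\VV\oplus\langle\OO\rangle_K$, each border term decomposes uniquely as $b_j=v_j+\sum_{i=1}^{\mu}\alpha_{ij}t_i$ with $v_j\in\VV$ and $\alpha_{ij}\in K$; setting $g_j:=b_j-\sum_i\alpha_{ij}t_i=v_j$ yields an $\OO$-border prebasis $G=\{g_1,\ldots,g_\nu\}$ contained in $\VV\subseteq I$. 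What remains is to verify the two conditions $P=\langle\OO\rangle_K+I$ and $\langle\OO\rangle_K\cap I=\{0\}$.

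For the spanning part I would let $W:=\langle\OO\rangle_K+I$ and show by induction on degree that every monomial of $\TT^n$ lies in $W$. The case $\OO=\emptyset$ is trivial, since then $\partial\OO=\{1\}\subseteq\UU=\VV\subseteq I$ forces $I=P$; otherwise $1\in\OO\subseteq W$. For the inductive step, $x_i\OO\subseteq\OO\cup\partial\OO$ and every $b_j\in\partial\OO$ satisfies $b_j=g_j+\sum_i\alpha_{ij}t_i\in I+\langle\OO\rangle_K=W$, so $x_iW\subseteq W$ for all $i$, and a degree induction shows that $W$ absorbs every term.

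The main obstacle is the independence $\langle\OO\rangle_K\cap I=\{0\}$, and this is where the hypothesis $\VV^+\cap\UU=\VV$ has to be consumed. My plan is to write $I=\bigcup_{k\geq 0}\VV^{(k)}$ with $\VV^{(0)}=\VV$ and $\VV^{(k+1)}=(\VV^{(k)})^+$ and prove $\VV^{(k)}\cap\langle\OO\rangle_K=\{0\}$ by induction on $k$. The base case $k=0$ is immediate from the directness of $\UU=\VV\oplus\langle\OO\rangle_K$, and the case $k=1$ follows directly from the closure hypothesis: since $\langle\OO\rangle_K\subseteq\UU$, one has $\VV^+\cap\langle\OO\rangle_K\subseteq\VV^+\cap\UU=\VV$, and hence $\VV^+\cap\langle\OO\rangle_K\subseteq\VV\cap\langle\OO\rangle_K=\{0\}$. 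To sustain the induction I would carry along an increasing chain $\UU^{(k)}$ with $\UU^{(0)}=\UU$ and $\UU^{(k+1)}=\UU^{(k)}+\sum_i x_i\UU^{(k)}$, and try to maintain two invariants in parallel: $\UU^{(k)}=\VV^{(k)}\oplus\langle\OO\rangle_K$ and $(\VV^{(k)})^+\cap\UU^{(k)}=\VV^{(k)}$. The direct-sum invariant propagates easily because $\partial\OO\subseteq\UU$ forces $x_i\langle\OO\rangle_K\subseteq\UU\subseteq\UU^{(k)}$, so each enlargement stays controlled. The closure invariant, however, is the genuinely hard step: the given hypothesis only controls a single application of~$+$, so a naive induction becomes circular, and additional input from the explicit form of $G$ (or a graded refinement of the filtration) seems to be required. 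Once both invariants can be maintained across $k$, the $k=1$ argument propagates verbatim to yield $\VV^{(k+1)}\cap\langle\OO\rangle_K\subseteq\VV^{(k)}\cap\langle\OO\rangle_K=\{0\}$, and taking the union over $k$ gives $I\cap\langle\OO\rangle_K=\{0\}$, completing the proof.
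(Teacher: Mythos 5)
Your outline correctly identifies the structure: build the $\OO$-border prebasis $G\subseteq\VV\subseteq I$ from the decomposition of each $b_j\in\partial\OO$ inside $\UU=\VV\oplus\langle\OO\rangle_K$, then invoke Proposition~\ref{PrpExAndUniqOfBB}(b), so that everything reduces to $P=I\oplus\langle\OO\rangle_K$. The spanning half ($\langle\OO\rangle_K+I=P$) is handled correctly, as are the $\OO=\emptyset$ case and the base and one-step cases of the independence induction: $\VV\cap\langle\OO\rangle_K=\{0\}$ from the direct sum, and $\VV^{+}\cap\langle\OO\rangle_K\subseteq\VV^{+}\cap\UU=\VV$, hence $\subseteq\VV\cap\langle\OO\rangle_K=\{0\}$.

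The gap you flag is genuine, and it is not a detail one can postpone: propagating $(\VV^{(k)})^{+}\cap\UU^{(k)}=\VV^{(k)}$ across $(\cdot)^{+}$ \emph{is} the substance of the proposition. The obstruction is concrete. Elements of $\VV^{(k+1)}=(\VV^{(k)})^{+}$ include polynomials $x_ix_jv$ with $v\in\VV^{(k-1)}$, and each such element arises in two ways, as $x_i\cdot(x_jv)$ and as $x_j\cdot(x_iv)$. The hypothesis $\VV^{+}\cap\UU=\VV$ controls a single application of $(\cdot)^{+}$ and places no constraint by itself on relations among these second-order extensions, so no purely filtration-theoretic or graded refinement of your setup removes the circularity. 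Moreover your claim that the direct-sum invariant ``propagates easily'' already presupposes the stability invariant at level $k$: the directness of $\UU^{(k+1)}=\VV^{(k+1)}+\langle\OO\rangle_K$ is exactly $\VV^{(k+1)}\cap\langle\OO\rangle_K=\{0\}$, which is the independence claim you are trying to establish, so the two invariants stand or fall together. Closing the gap requires using the concrete combinatorics of $\partial\OO$ and the explicit prebasis $G$: the proof in the source this proposition is quoted from (Kehrein--Kreuzer, \textit{Computing border bases}, Propositions 14 and 16, following Mourrain's criterion for normal form algorithms) shows that the hypotheses force the formal multiplication endomorphisms of $\langle\OO\rangle_K$ defined by reduction against $G$ to be well defined and to commute, and that commutation is precisely what allows both $\UU^{(k)}=\VV^{(k)}\oplus\langle\OO\rangle_K$ and $(\VV^{(k)})^{+}\cap\UU^{(k)}=\VV^{(k)}$ to be carried to the next level. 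Your proposal is therefore a sound reduction plus an honest acknowledgment that the central lemma is absent; without that commutation argument, or an equivalent analysis of the neighbour relations in the border, it does not constitute a proof.
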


The following example shows in which way the standard Border Basis Algorithm as
given in \cite{KKCompBBases} is limited and points out the cause of the
limitation. The following sections will then address this problem.

\begin{exa}\label{ExaStdBBAProblem}
Let $K = \QQ$ and $P = \QQ[x, y]$. Let $\OO = \{ 1, y, y^2, x, x^2 \}$.
The set $\{ g_1, g_2, g_3, g_4, g_5 \} = \{ xy + x^2 - \frac{1}{2}y^2 - x
- \frac{1}{2}y, x^3 - x, x^2y - \frac{1}{2}y^2 - \frac{1}{2}y, xy^2 + x^2
- \frac{1}{2}y^2 - x - \frac{1}{2}y, y^3 - y \}$
is the $\OO$-border basis of the vanishing ideal $I$ of the
five points $(-1, 1), (1, 1)$,
$(0, 0), (1, 0), (0, -1) \in \mathbb{A}^2(\QQ)$.
But it cannot be computed by the (Improved) Border Basis
Algorithm as stated in \cite{KKCompBBases} because
 $\LTs(g_1) = x^2$ or $\LTs(g_1) = y^2$ for any term ordering $\sigma$ on
$\TT^2$ which implies $\LTs(I)
\cap \OO \ne \emptyset$.
\end{exa}

\bigskip

\section{A Border Basis Algorithm with Term Marking
Strategy}\label{SecBBAWithTMS}
As we have seen in example \ref{ExaStdBBAProblem}, using a (degree
compatible) term ordering on $\TT^n$ may prevent certain terms in
the support of a polynomial from becoming a border term. In the
following we investigate the idea of exchanging the term ordering for a
weaker marking on the polynomials while keeping the restriction that a term $t
\in \Supp(f)$ of a polynomial $f$ may only be marked if $\deg(t) = \deg(f)$.
This idea will then be incorporated into the Improved Border Basis Algorithm
\cite[Proposition 21]{KKCompBBases}. As a consequence we will have to
reformulate those steps of the Improved Border Basis Algorithm that rely on the
usage of a (degree compatible) term ordering. Let us first specify which kind
of polynomial marking we want to consider.

\begin{dfn}
Let $\FF \subseteq P \setminus \{ 0 \}$.
\begin{enumerate}
  \item[a)] We say that the polynomials of $\FF$ are \textbf{marked} if there
  exists a mapping
  $\MT_\FF : \FF \rightarrow \bigcup _{f \in \FF} \Supp(f)$
  which satisfies $\MT_\FF(f) \in \Supp(f)$ and
  $\deg(\MT_\FF(f)) = \deg(f)$ for all $f \in \FF$.
  We call $\MT_\FF(f)$ the \textbf{marked term} of $f \in
  \FF$ and $\MT_\FF$ a \textbf{marking} of $\FF$.
  \item[b)] If $\MT_\FF$ is a marking of $\FF$ then we say that $t \in
  \bigcup _{f \in \FF} \Supp(f)$ is \textbf{marked} if $t = \MT_\FF(f)$
  for an $f \in \FF$.
\end{enumerate}
\end{dfn}

\begin{exa}\label{ExaDegCompTOIsMarking} Let $\sigma$ be a degree compatible
term ordering on $\TT^n$ and let $\LTs(f)$ denote the leading term of a
polynomial $f \in P$ with respect to $\sigma$. Then $\LTs$ is a marking for any
$\FF \subseteq P \setminus \{ 0 \}$.
\end{exa}

\begin{exa} Let $K = \QQ, P = \QQ[x, y]$, and $\FF = \{ f_1, f_2 \} = \{ x^2 +
xy - y^2 - x - y, x^3 - x \}$. If we want to mark the polynomials of $\FF$ we
have to mark the term $x^3$ in $f_2$. For the marked term of $f_1$ we have
three different possibilities to choose from: we could either mark $x^2, y^2$
or $xy$. Please note that the term $xy$ could not appear as a marked term if we
would choose a degree compatible term ordering on $\TT^2$ to mark the
polynomials of $\FF$ as explained in example \ref{ExaDegCompTOIsMarking}.
\end{exa}

Our next goal is to adapt the computation procedure \cite[Lemma
12]{KKCompBBases} to our setting of marked polynomials.

\begin{prp}\label{PrpMarkedInterreduction}\textnormal{\textbf{(Marked
Interreduction)}}\\
Let $\{ f_1, \ldots, f_s \} \subset P \setminus \{ 0 \}$ be a
set of marked polynomials. Let $\mathop{\bigcup}_{i=1}^s
\Supp(f_i) = \{ t_1, \ldots, t_l \}$ be enumerated in such a way that
$\deg(t_1) \geq \ldots \geq \deg(t_l)$ holds. Consider the following
algorithm.
\begin{enumerate}
  \item[(1)] Write $f_i = a_{i1} t_1 + \ldots + a_{il} t_l$ for $i = 1, \ldots,
  s$ where $a_{ij} \in K$ and let $\MatA := (a_{ij}) \in \Mat_{s,l}(K)$.
  \item[(2)] Let $p_1, \ldots, p_s \in \{ 1, \ldots, l \}$ such that
  $\MT_{\FF}(f_i) = t_{p_{_i}}$ for $i = 1, \ldots, s$.
  \item[(3)] Repeat the following steps $(4)-(6)$ for $i = 1, \ldots, s$.
  \item[(4)] If $a_{ip_{_i}} \ne 0$ then replace $a_{ij}$ by
  $a_{ip_{_i}}^{-1} \cdot a_{ij}$ for $j = 1, \ldots, l$.
  \item[(5)] For each $k \in \{ 1, \ldots, s \} \setminus \{ i \}$ such that
  $a_{kp_{_i}} \ne 0$ replace $a_{kj}$ by $a_{kj} - a_{kp_{_i}}
  \cdot a_{ij}$ for $j = 1, \ldots, l$.
  \item[(6)] Check for $k = i + 1, \ldots, s$ if $a_{kp_{_k}} = 0$. For any such
  $k$ determine the minimum column index $j$ such that $a_{kj} \ne 0$ if such an
  index exists and set $p_k := j$.
  \item[(7)] Let $\FF' = \emptyset$. For $i = 1, \ldots, s$ check if $f_i :=
  a_{i1}t_1 + \ldots + a_{il}t_l \ne 0$. For any such $f_i$ set $\FF' := \FF'
  \cup \{ f_i \}$ and $\MT_{\FF'}(f_i) := t_{p_{_i}}$.
  \item[(8)] Return $\FF'$ and $\MT_{\FF'}$.
\end{enumerate}
This is an algorithm that computes a set of polynomials $\FF' = \{ f'_1, \ldots,
f'_m \}$ and a mapping $\MT_{\FF'} : \FF' \rightarrow \bigcup _{i = 1}^m
\Supp(f'_i)$ such that the following properties hold.
\begin{enumerate}
  \item[a)] $\langle f'_1, \ldots, f'_m \rangle_K = \langle f_1, \ldots f_s
  \rangle_K$.
  \item[b)] $\MT_{\FF'}$ is a marking of $\FF'$.
  \item[c)] $\MT_{\FF'}(f'_i) \notin \Supp(f'_j)$ for all $i, j \in \{ 1,
  \ldots, m \}$ and $i \ne j$. In particular, the marked terms of $f'_1,
  \ldots, f'_m$ are pairwise different.
\end{enumerate}
\end{prp}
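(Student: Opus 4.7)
The plan is to treat the algorithm as a Gauss--Jordan style reduction of $\MatA$ driven by the marked columns, and to establish properties (a), (b), (c) via a single induction on the iteration counter $i$. Termination is immediate, since the loop at step (3) runs $s$ times and each of steps (4)--(6) performs finitely many field operations. For property (a), I would observe that steps (4) and (5) are elementary row operations on $\MatA$: step (4) multiplies row $i$ by the nonzero scalar $a_{ip_i}^{-1}$, and step (5) adds scalar multiples of row $i$ to other rows. Translated back to the polynomials, these correspond to replacing an $f_i$ by a nonzero scalar multiple and adding scalar multiples of $f_i$ to the other $f_k$, both of which preserve the $K$-linear span. Step (7) only removes zero polynomials, so $\langle f'_1, \ldots, f'_m \rangle_K = \langle f_1, \ldots, f_s \rangle_K$.

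For (b) and (c), I would prove by induction on $i$ the following invariants at the end of iteration $i$: (I1) for every $i' \leq i$ with current $f_{i'} \neq 0$, $a_{i'p_{i'}} = 1$ and $\deg(t_{p_{i'}}) = \deg(f_{i'})$; (I2) for every such $i'$ and every $k \neq i'$, $a_{kp_{i'}} = 0$; (I3) for every $k > i$ with current $f_k \neq 0$, $p_k$ is defined, $a_{kp_k} \neq 0$, and $\deg(t_{p_k}) = \deg(f_k)$. At $i = 0$ these follow from the hypothesis that the input polynomials are marked. The crucial work in the inductive step happens in step (5). When $a_{kp_i} \neq 0$ and we replace $a_{kj}$ by $a_{kj} - a_{kp_i} \cdot a_{ij}$, three effects need to be tracked: column $p_i$ of row $k$ becomes zero, which extends (I2) to $i' = i$; for any earlier pivot column $p_{i'}$ with $i' < i$ one has $a_{ip_{i'}} = 0$ by (I2), so row $k$ is unchanged at column $p_{i'}$ and (I2) is preserved; and, most importantly, the degree of row $k$ does not increase, because $a_{kp_i} \neq 0$ forces $t_{p_i} \in \Supp(f_k)$ and hence $\deg(f_i) = \deg(t_{p_i}) \leq \deg(f_k)$, whence $\deg(f_k^{\mathrm{new}}) \leq \deg(f_k)$.

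From this degree bound I would deduce that the marking is maintained as follows. If $a_{kp_k}$ remains nonzero, the chain $\deg(t_{p_k}) = \deg(f_k) \geq \deg(f_k^{\mathrm{new}}) \geq \deg(t_{p_k})$ forces equality and $p_k$ is still a valid mark. If $a_{kp_k}$ has been zeroed out, step (6) re-selects $p_k$ as the smallest column index with a nonzero entry; since terms are enumerated with $\deg(t_1) \geq \ldots \geq \deg(t_l)$, this smallest index corresponds to a term of maximum degree in the new $f_k$, so $\deg(t_{p_k}) = \deg(f_k^{\mathrm{new}})$. If no nonzero entry exists, $f_k$ has become zero and is dropped in step (7). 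Combined with the trivial preservation of (I1) for row $i$ itself through the scaling in step (4), this verifies (I1)--(I3) after iteration $i$.

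Setting $i = s$ yields the three conclusions: (I1) gives $t_{p_i} \in \Supp(f'_i)$ with $\deg(t_{p_i}) = \deg(f'_i)$, so $\MT_{\FF'}$ is a marking, establishing (b); (I2) gives $t_{p_i} \notin \Supp(f'_j)$ for every $j \neq i$, establishing the main assertion of (c), and in particular the marked terms are distinct because $p_i = p_j$ with $i \neq j$ would contradict $a_{jp_j} = 1 \neq 0 = a_{jp_i}$. The hard part will be the degree-tracking step: one has to rule out that a later elimination could introduce a term of degree strictly higher than $\deg(t_{p_k})$ into row $k$ without triggering a re-selection in step (6), and this is closed exactly by the observation that $a_{kp_i} \neq 0$ forces $\deg(f_i) \leq \deg(f_k)$. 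The sorted enumeration of support terms by decreasing degree is what makes the re-selection in step (6) automatically yield a term of top degree, so that the degree condition takes over the role played by the degree compatible term ordering in the original Border Basis Algorithm.
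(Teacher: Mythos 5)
Your proposal is correct and follows essentially the same approach as the paper: Gauss--Jordan elimination driven by the marked columns, with claim (a) coming from elementary row operations, and claims (b) and (c) established by tracking through the iterations that each pivot column carries a single nonzero entry and that $a_{kp_i} \neq 0$ forces $\deg f_i \leq \deg f_k$, so reduction never increases degree and the sorted enumeration makes the re-selection in step (6) pick a maximal-degree term. Your formulation is somewhat more explicit (named invariants (I1)--(I3) and a clean induction scheme), but it is the same argument.
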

\begin{proof}
All loops contained in the algorithm terminate obviously after finitely many
iterations.

Claim a) holds because the only changes performed on a input polynomial
are the addition of $K$-multiples of some other input polynomials or the scaling
of the polynomial by a factor in $K$.\\
To prove claim b) we show that after each iteration of the main loop $(4)-(6)$
the equation $\deg(t_{p_{_j}}) = \deg(a_{j1} t_1 + \ldots + a_{jl} t_l)$ and
the inequation $a_{jp_{_j}} \ne 0$ hold for all $j \in \{ 1, \ldots, s
\}$ such that $a_{j1} t_1 + \ldots + a_{jl} t_l \ne 0$. First we note that upon
termination of the steps $(4)-(6)$ the $p_i$-th column of $\MatA$ contains
exactly one non-zero element $a_{ip_{_i}}$ if the $i$-th row of $\MatA$ is not
a zero row. By initialization $\deg(a_{11} t_1 + \ldots + a_{1l} t_l) =
\deg(t_{p_{_1}})$ and $a_{1p_{_1}} \ne 0$ hold during the first iteration of
loop $(4)-(6)$. Assume a row $k$ is going to be reduced and it does not
completely reduce to zero. Before the reduction the relation $\deg(a_{k1} t_1 +
\ldots + a_{kl} t_l) \geq \deg(t_{p_{1}})$ holds. If $a_{kp_{_k}} \ne 0$ after
the reduction then the equality $\deg(a_{k1} t_1 + \ldots + a_{kl} t_l) =
\deg(t_{p_{_k}})$ follows directly. Otherwise the equality and $a_{kp_{_k}} \ne
0$ are implied by the decreasing ordering of the terms $t_1, \ldots, t_l$ and
the update of $p_k$. Now assume the $i$-th iteration of the loop $(4)-(6)$ is
being executed for $i \geq 2$. A reduction of a row $k$ in step $(5)$ where $k <
i$ does not affect the element $a_{kp_{_k}}$ because of $a_{ip_{_k}} = 0$. Thus
the statements $\deg(a_{k1} t_1 + \ldots + a_{kl} t_l) = \deg(t_{p_{_k}})$ and
$a_{kp_{_k}} \ne 0$ are true. With an argument similar to the one in the case of
the first iteration of the loop $(4)-(6)$ we derive the conclusion that the
equation $\deg(a_{k1} t_1 + \ldots + a_{kl} t_l) = \deg(t_{p_{_k}})$ and the
inequation $a_{kp_{_k}} \ne 0$ hold for $k > i$ upon termination of step $(6)$
whenever $a_{k1} t_1 + \ldots + a_{kl} t_l \ne 0$. This yields
the correctness of claim b).

To prove the correctness of claim c) consider $i \in \{ 1, \ldots, s \}$ such
that the $i$-th row of $\MatA$ is not a zero row. From our observations above
we conclude that $a_{ip_{_i}}$ is the only non-zero element of the
$p_i$-th column of $\MatA$. As $a_{ip_{_i}}$ is the coefficient of the
marked term of $a_{i1} t_1 + \ldots + a_{il} t_l$ with respect to $\MT_{\FF'}$,
claim c) is true.
\end{proof}

\begin{rem}
Obviously the resulting marking $\MT_{\FF'}$ does not only depend on the
initial marking $\MT_{\FF}$ but also on the ordering of the terms $t_1, \ldots,
t_l$. To gain more control over the resulting marking $\MT_{\FF'}$, step $(5)$
can be replaced by a refined pivot element selection process.
\end{rem}

\begin{rem}\label{RemMIBacktracking}
If the value of a pivot index $p_k$ is updated during the execution of the
algorithm in step (6) the situation might occur that the set\linebreak
$\{ i\ |\
j \leq i \leq l\ \textnormal{and}\ \deg(t_i) = \deg(t_j) \}$ contains more than one
element. This means that $p_k$ could be set to any value of this set since it
would not change the correctness of the algorithm. If the algorithm is equipped
with an additional book keeping functionality that creates a log entry whenever
such a situation occurs, it is possible to perform backtracking to redo the
computation with different pivot indices.
\end{rem}

\begin{exa}
Let $P = \QQ[x, y, z]$ and $\FF = \{ f_1, f_2, f_3, f_4, f_5 \} = \{ xy^2 +
x^3 + z, xy^2 - yz, xy^2 + 1, y^5 - xy^2 - y^2 + yz + z, z^3 \}$ a set of
marked polynomials where $\MT_\FF(f_1) = \MT_\FF(f_2) = \MT_\FF(f_3) = xy^2,
\MT_\FF(f_4) = y^5$, and $\MT_\FF(f_5) = z^3$. We choose the numeration
$t_1 = y^5, t_2 = z^3, t_3 = x^3, t_4 = xy^2, t_5 = yz, t_6 = y^2, t_7 = z$,
and $t_8 = 1$ and apply the Marked Interreduction.
\begin{enumerate}
  \item[(1)] We obtain the matrix
  \[ \MatA =
    \begin{pmatrix}
      0 & 0 & 1 & [1] & 0 & 0 & 1 & 0\\
      0 & 0 & 0 & [1] & -1 & 0 & 0 & 0\\
      0 & 0 & 0 & [1] & 0 & 0 & 0 & 1\\
      [1] & 0 & 0 & -1 & 1 & -1 & 1 & 0\\
      0 & [1] & 0 & 0 & 0 & 0 & 0 & 0
    \end{pmatrix}
  \]
  where the elements in brackets mark the elements $a_{ip_{_i}}$ in each row.
  \item[(2)] We obtain $p_1 = p_2 = p_3 = 4, p_4 = 1$, and $p_5 = 2$.
  \item[(3)] Since $a_{1p_{_1}} = 1$ we do not change any element of the
  first row of $\MatA$.
  \item[(4)] Since $a_{2p_{_1}}, a_{3p_{_1}}$, and $a_{4p_{_1}}$ are not equal
  to zero the matrix $\MatA$ becomes the matrix
  \[ \MatA =
    \begin{pmatrix}
      0 & 0 & 1 & [1] & 0 & 0 & 1 & 0\\
      0 & 0 & -1 & [0] & -1 & 0 & -1 & 0\\
      0 & 0 & -1 & [0] & 0 & 0 & -1 & 1\\
      [1] & 0 & 1 & 0 & 1 & -1 & 2 & 0\\
      0 & [1] & 0 & 0 & 0 & 0 & 0 & 0
    \end{pmatrix}.
  \]
  \item[(5)] $a_{2p_{_2}} = a_{3p_{_3}} = 0$ and we set $p_2 = p_3 = 3$.
  \item[(3)] We multiply the second row of $\MatA$ by $-1$.
  \item[(4)] Since $a_{3p_{_2}} \ne 0$ the matrix $\MatA$ becomes the matrix
  \[ \MatA =
    \begin{pmatrix}
      0 & 0 & 1 & [1] & 0 & 0 & 1 & 0\\
      0 & 0 & [1] & 0 & 1 & 0 & 1 & 0\\
      0 & 0 & [0] & 0 & 1 & 0 & 0 & 1\\
      [1] & 0 & 1 & 0 & 1 & -1 & 2 & 0\\
      0 & [1] & 0 & 0 & 0 & 0 & 0 & 0
    \end{pmatrix}.
  \]
  \item[(5)] $a_{3p_{_3}} = 0$ and we set $p_3 = 5$.
  \item[(3)] Since $a_{3p_{_3}} = 1$ we do not change any element of the
  third row of $\MatA$.
  \item[(4)] Since $a_{2p_{_3}}$ and $a_{4p_{_3}}$ are not equal
  to zero the matrix $\MatA$ becomes the matrix
  \[ \MatA =
    \begin{pmatrix}
      0 & 0 & 1 & [1] & 0 & 0 & 1 & 0\\
      0 & 0 & [1] & 0 & 0 & 0 & 1 & -1\\
      0 & 0 & 0 & 0 & [1] & 0 & 0 & 1\\
      [1] & 0 & 1 & 0 & 0 & -1 & 2 & -1\\
      0 & [1] & 0 & 0 & 0 & 0 & 0 & 0
    \end{pmatrix}.
  \]
\end{enumerate}
The matrix $\MatA$ is left unchanged during the remaining loops of the
steps $(3)-(5)$ where $i = 4$ and $i = 5$. Finally, in step $(7)$ we obtain
$\FF' = \{ f'_1, f'_2, f'_3, f'_4, f'_5 \} = \{ xy^2 + x^3 + z, x^3 + z - 1, yz
+ 1, y^5 + x^3 - y^2 + 2z - 1, z^3 \}$ where $\MT_{\FF'}(f'_1) = xy^2,
\MT_{\FF'}(f'_2) = x^3, \MT_{\FF'}(f'_3) = yz, \MT_{\FF'}(f'_4) = y^5,
\MT_{\FF'}(f'_5) = z^3$.
\end{exa}

Let $\FF = \{ f_1, \ldots, f_s \} \subset P \setminus \{ 0 \}$ and assume that
$I = \langle \FF \rangle_P$ is zero-dimensional. We are now ready to
reformulate the Improved Border Basis Algorithm \cite[Proposition
21]{KKCompBBases}. The result will be an algorithm that allows the user to
outline a subset of $\TT^n$ by choosing a marking on $\FF$. The algorithm
will then try to compute a suitable order ideal $\OO$ in this outlined subset
of $\TT^n$ for which an $\OO$-border basis of $I$ exists. This flexibility is
achieved at the cost of a loss of predictability: The algorithm will not be
able to produce an $\OO$-border basis of $I$ for any given marking on $\FF$ (cf.
remark \ref{RemAlgoCanStopInT7}). However, if a degree compatible term ordering
$\sigma$ on $\TT^n$ is used to mark the polynomials of $\FF$ and the polynomials
obtained during the computation then the algorithm will always compute the
$\OOsI$-border basis of $I$ (cf. remark \ref{RemDCTOAlwaysWorks}). Lastly, we
note that the overall structure of the Improved Border Basis Algorithm
\cite[Proposition 21]{KKCompBBases} will be kept unchanged in the following
reformulation.

\begin{prp}\label{PrpBBAWithTMS}\textnormal{\textbf{(Border Basis Algorithm
with Term Marking Strategy)}}\\
Let $\FF = \{ f_1, \ldots, f_s \} \subset P \setminus \{ 0 \}$ be a set of
marked polynomials such that $\MT_{\FF}(f_i) \notin \Supp(f_j)$ where $1 \leq i,
j \leq s$ and $i \ne j$. Assume that $I = \langle \FF \rangle_P$ is
zero-dimensional. The following algorithm stops without a result in step (T7)
or computes an order ideal $\OO \subset \TT^n$ and a set of marked polynomials
$\{ g_1, \ldots, g_\nu \}$ such that $\{ g_1, \ldots, g_\nu \}$ is the
$\OO$-border basis of $I$ and if $t \in \Supp(g)$ satisfies $t \in \partial
\OO$ for a $g \in \{ g_1, \ldots, g_\nu \}$ then $t$ is marked in $g$.

\begin{enumerate}
  \item[(T1)] Let $\,\UU$ be the order ideal spanned by
  $\bigcup_{i=1}^s \Supp(f_i)$.
  \item[(T2)] Compute a $K$-vector space basis $\VV$ of $\langle \FF
  \rangle_K$ with pairwise different marked terms:
  Apply the Marked Interreduction to $\{ f_1, \ldots, f_s \}$ to obtain
  $\VV = \{ f'_1, \ldots, f'_m \}$.
  \item[(T3)] Compute a $K$-vector space basis $\VV' \cup \WW'$ of
  $\langle \VV^+ \rangle_K$ such that the elements of $\VV' \cup \WW'$
  have pairwise different marked terms and $\MT_{\VV' \cup \WW'}(v) \notin
  \Supp(w)$ for all $v, w \in \VV' \cup \WW'$ and $v \ne w$:
  \begin{enumerate}
    \item[a)] Mark in all $x_iv \in \VV^+ \setminus \VV, 1 \leq i \leq n,
    v \in \VV$ the term $x_it$ where $t \in \Supp(v)$ is the marked term of $v$.
    \item[b)] Let $\VV = \{ v_1, \ldots, v_l \}$ and $\VV^+ \setminus \VV = \{
    v'_1, \ldots, v'_{l'} \}$ and apply the Marked Interreduction to $\{ v_1,
    \ldots, v_l, v'_1, \ldots, v'_{l'} \}$ to obtain $\tilde{\VV} = \{
    \tilde{f}_1, \ldots, \tilde{f}_m \}$.
    \item[c)] Let $T = \{ \MT_\VV(v)\ |\ v \in \VV \}$ and $\VV' = \{ \tilde{f}
    \in \tilde{\VV}\ |\ \MT_{\tilde{\VV}}(\tilde{f}) \in T \}$. Let $\WW' =
    \tilde{\VV} \setminus \VV'$.
  \end{enumerate}
  \item[(T4)] Let $\WW = \{ w \in \WW'\ |\ \MT_{\WW'}(w) \in \UU \}$.
  \item[(T5)] If $\ \bigcup_{w \in \WW} \Supp(w) \nsubseteq
  \UU$ then replace $\,\UU$ by the order ideal spanned by $\,\UU$ and\linebreak
  $\bigcup_{w \in \WW} \Supp(w)$ and continue with (T4).
  \item[(T6)] If $\WW \ne \emptyset$ then replace $\VV$ by $\VV' \cup \WW$ and
  continue with (T3).
  \item[(T7)] Let $\mathcal{O} = \UU \setminus \{ \MT_\VV(v)\ |\ v \in \VV \}$.
  If $\OO$ is not an order ideal then stop and output
  ''$\OO$ is not an order ideal in step (T7).''.
  \item[(T8)] If $\partial \OO \nsubseteq \UU$ then replace $\,\UU$ by the
  order ideal $\,\UU^+$ and continue with (T3).
  \item[(T9)] Select in $\VV$ those $g_1, \ldots, g_\nu$ which satisfy
  $\MT_\VV(g_i) \in \partial \OO$ where $1 \leq i \leq \nu$. Output $\{ g_1,
  \ldots, g_\nu \}$ and its marking as well as $\OO$.
\end{enumerate}
\end{prp}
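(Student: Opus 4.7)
The plan is to establish the claim in three parts: termination of the three nested loops, identification of the output at (T9) with an $\OO$-border basis of $I$ via Proposition \ref{PrpExistenceOfBB} and Proposition \ref{PrpExAndUniqOfBB}(b), and verification of the marking property.

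For termination I would argue from innermost to outermost. The loop (T4)--(T5) terminates because each non-exiting pass strictly enlarges the finite-dimensional space $\langle \UU \rangle_K$ using support terms drawn from the finite set $\WW$. The loop (T3)--(T6) terminates because each pass with $\WW \ne \emptyset$ adds to the updated $\VV$ a polynomial with a new marked term that, by property (c) of Proposition \ref{PrpMarkedInterreduction}, is absent from every other polynomial of $\VV$, so $\dim_K \langle \VV \rangle_K$ strictly increases while remaining bounded by $\dim_K \langle \UU \rangle_K$ for fixed $\UU$. The outer loop (T8) is the delicate one: $\UU$ is replaced by $\UU^+$, enlarging its degrees, while zero-dimensionality of $I$ yields $\dim_K P/I < \infty$. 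From $|\OO| = \dim_K \langle \UU \rangle_K - \dim_K \langle \VV \rangle_K$ and $\langle \VV \rangle_K \subseteq I \cap \langle \UU \rangle_K$, once $\langle \UU \rangle_K$ surjects onto $P/I$ one has $|\OO| = \dim_K P/I$, so $\OO$ itself stabilizes and its finite border $\partial \OO$ is absorbed by some later $\UU^+$.

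For the correctness at (T9) I would identify the sets $\VV$ and $\UU$ with their $K$-linear spans and verify the hypotheses of Proposition \ref{PrpExistenceOfBB}: (1) $\langle \VV \rangle = I$ is a loop invariant, since only $K$-linear combinations of the original $\FF$ are ever formed; (2) $\VV^+ \cap \UU = \VV$ follows from the exit condition $\WW = \emptyset$ of (T6), because in the basis $\VV' \cup \WW'$ of $\langle \VV^+ \rangle_K$ produced by Marked Interreduction the marked terms of $\WW'$ lie outside $\UU$ and, by property (c), cannot cancel in any $K$-linear combination, so any element of $\langle \VV^+ \rangle_K \cap \langle \UU \rangle_K$ already lies in $\langle \VV' \rangle_K = \langle \VV \rangle_K$; (3) $\UU = \VV \oplus \langle \OO \rangle_K$ follows from $\OO = \UU \setminus \{\MT_\VV(v) : v \in \VV\}$ together with the pairwise distinct marked terms of $\VV$ (again property (c)); (4) $\partial \OO \subseteq \UU$ is precisely the exit condition of (T8). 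Proposition \ref{PrpExistenceOfBB} then yields an $\OO$-border basis of $I$, and the polynomials $g_1, \ldots, g_\nu$ selected in (T9) form an $\OO$-border prebasis contained in $I$ (leading coefficient $1$ on the marked term by step (4) of Marked Interreduction, remaining support terms in $\OO$ by property (c)), so Proposition \ref{PrpExAndUniqOfBB}(b) identifies them with that border basis.

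The marking property then falls out at once: conditions (3) and (4) together imply $\partial \OO \subseteq \{\MT_\VV(v) : v \in \VV\}$, so any $t \in \Supp(g_i) \cap \partial \OO$ equals $\MT_\VV(g_j)$ for some $j$, and property (c) forces $j = i$, whence $t$ is marked in $g_i$. I expect the main obstacle to be the termination of the outer loop: without a degree compatible term ordering to bound leading term degrees a priori, one must carefully couple the combinatorial growth of $\UU$ under the $(-)^+$-operation to the algebraic finiteness $\dim_K P/I < \infty$, while also handling the benign alternative where the algorithm correctly halts at (T7) because $\OO$ fails to be an order ideal.
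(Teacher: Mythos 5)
Your decomposition of the proof into termination, correctness at (T9) via Propositions \ref{PrpExistenceOfBB} and \ref{PrpExAndUniqOfBB}, and the marking property is the right structure and largely matches the paper. The (T9) argument and the marking argument are sound. However, the termination argument for the outer loop (T3)--(T8) has a genuine gap.

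You write that from $|\OO| = \dim_K \langle \UU \rangle_K - \dim_K \langle \VV \rangle_K$ and $\langle \VV \rangle_K \subseteq I \cap \langle \UU \rangle_K$, once $\langle \UU \rangle_K$ surjects onto $P/I$ one obtains $|\OO| = \dim_K P/I$. But the inclusion $\langle \VV \rangle_K \subseteq I \cap \langle \UU \rangle_K$ only yields $|\OO| \geq \dim_K \langle \UU \rangle_K - \dim_K (I \cap \langle \UU \rangle_K) = \dim_K P/I$, a lower bound; to obtain equality you would need $\langle \VV \rangle_K = I \cap \langle \UU \rangle_K$. That equality is not a consequence of $\langle \UU \rangle_K$ merely surjecting onto $P/I$; $\langle \VV \rangle_K$ is the $\langle \UU \rangle_K$-stable span of $\langle \FF \rangle_K$, and that span can well be a proper subspace of $I \cap \langle \UU \rangle_K$ because elements of $I \cap \langle \UU \rangle_K$ can arise from cancellations passing through terms outside $\UU$. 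Establishing when stability within $\UU$ forces equality is precisely the hard part you cannot skip, and as written your argument would permit $|\OO|$ to grow without bound. The paper takes a different route: since $I$ is zero-dimensional, for each $i$ there is $p_i = x_i^{d_i} + \cdots + a_0 \in I$ with a representation $p_i = \sum_j h_j f_j$; after finitely many enlargements of $\UU$ this representation is captured so that $p_i \in \langle \VV \rangle_K$, a degree argument using property c) of the Marked Interreduction then forces $x_i^{d_i}$ to be a marked term of some $v \in \VV$, and since $\OO$ (when it is an order ideal) excludes all multiples of marked terms, it is contained in the finite set $\TT^n \setminus \{ t \cdot x_i^{d_i} \mid t \in \TT^n,\, 1 \leq i \leq n \}$. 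This combinatorial bound, rather than a dimension count on $P/I$, is what terminates the loop (T8). A smaller issue: in your argument for loop (T3)--(T6) you bound $\dim_K \langle \VV \rangle_K$ by $\dim_K \langle \UU \rangle_K$ ``for fixed $\UU$,'' but $\UU$ can be enlarged in (T5) during that same loop; the paper supplies the missing step that if the initial $\UU$ is contained in $\TT^n_{\leq d}$ then every enlargement performed in (T4)--(T5) stays inside $\TT^n_{\leq d}$, because the selection criterion $\MT_{\WW'}(w) \in \UU$ together with $\deg(\MT_{\WW'}(w)) = \deg(w)$ bounds $\Supp(w)$.
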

\begin{proof}
The correctness of (T2) and (T3) is implied by proposition 
\ref{PrpMarkedInterreduction}. Additionally, the following equality holds
in (T3): $\bigcup_{v \in \VV} \{ \MT_{\VV}(v) \}
= \bigcup_{v \in \VV'} \{ \MT_{\VV'}(v) \}$. The relation ''$\supseteq$'' holds
by construction of $\VV'$ and ''$\subseteq$'' because of the order in which the
input polynomials $\{ v_1, \ldots, v_l, v'_1, \ldots, v'_{l'} \}$ are
processed by the Marked Interreduction.

The loop (T4)-(T5) is finite since each
enlargement of $\UU$ is contained in the finite order ideal spanned by
$\bigcup_{v \in \VV' \cup \WW'} \Supp(v)$ because of $\WW \subseteq \WW'$.

At the end of loop (T4)-(T5), $\langle \VV' \cup \WW \rangle_K =
\langle \VV' \cup \WW' \rangle_K \cap \langle \UU \rangle_K$ holds:
The relation ''$\subseteq$'' follows by construction of $\WW$ and step (T5). To
show ''$\supseteq$'' let $v = \alpha_1 v_1 + \ldots + \alpha_r v_r
+ \beta_1 w_1 + \ldots + \beta_l w_l \in \langle \UU \rangle_K$ where
$\alpha_1, \ldots, \alpha_r, \beta_1, \ldots, \beta_l \in K \setminus \{ 0 \},
v_1, \ldots, v_r \in \VV'$, and $w_1, \ldots, w_l \in \WW'$. If $r + l = 1$,
the inclusion follows by $\MT_{\VV' \cup \WW'}(v) \in \UU$. Now let
$r + l > 1$. If $r \geq 1$ then $v - \alpha_1 v_1 \in \langle \VV'
\cup \WW' \rangle_K \cap \langle \UU \rangle_K$ because of
$\Supp(v_1) \subseteq \UU$
and by the induction hypothesis
$v = (v - \alpha_1 v_1) + \alpha_1 v_1 \in \langle \VV' \cup \WW \rangle_K$.
If $l \geq 1$ then $v - \beta_1 w_1 \in \langle
\VV' \cup \WW' \rangle_K \cap \langle \UU \rangle_K$ holds: The
term $\MT_{\WW'}(w_1)$ neither is contained in $\Supp(v_i), 1 \leq i \leq r$ nor
$\Supp(w_i), 2 \leq i \leq l$. This implies $\MT_{\WW'}(w_1) \in \UU$ and
therefore $w_1 \in \WW$ since (T4) has been executed and $v - \beta_1 w_1
\in \langle \VV' \cup \WW' \rangle_K \cap \langle \UU \rangle_K$ since (T5)
has been executed. Again, by induction hypothesis $v = (v - \beta_1 w_1) + \beta_1 w_1
\in \langle \VV' \cup \WW \rangle_K$.

The loop (T3)-(T6) is finite: At the beginning of an arbitrary iteration,
let $\UU$ be contained some $\TT^n_{\leq d}$. A possible enlargement of $\UU$
in (T3) is contained in $\TT^n_{\leq d}$. The subset selection
criterion $\MT_{\WW'}(w) \in \UU$ in (T4) yields
$\Supp(w) \subseteq \TT^n_{\geq d}$ where $w \in \WW'$ since
$\deg(\MT_{\WW'}(w)) = \deg(w)$ holds. Thus all enlargements of $\UU$
take place in the finite set $\TT^n_{\leq d}$.

Upon termination of loop (T3)-(T6), the equality $\langle \VV \rangle_K
= \langle \VV \rangle^+_K \cap \langle \UU \rangle_K$ holds: (T3) yields
$\langle \VV' \cup \WW' \rangle_K = \langle \VV \rangle^+_K$. Since the loop
(T4)-(T5) has been executed,\linebreak
$\langle \VV' \cup \WW \rangle_K = \langle \VV' \cup \WW' \rangle_K
\cap \langle \UU \rangle_K$. After exiting
the loop (T3)-(T6) in (T6), $\WW = \emptyset$ and therefore
$\langle \VV' \rangle_K = \langle \VV \rangle^+_K \cap \langle \UU \rangle_K$.
It remains to show $\langle \VV \rangle_K = \langle \VV' \rangle_K$. This
is implied by $\bigcup_{v \in \VV} \{ \MT_{\VV}(v) \}
= \bigcup_{v \in \VV'} \{ \MT_{\VV'}(v) \}$ and $\WW = \emptyset$ during the
last iteration of (T3)-(T6).

The loop (T3)-(T8) is finite: Consider the case that the algorithm
does not terminate in any iteration in (T7). Let $i \in \{ 1, \ldots, n \}$.
$I$ is zero-dimensional thus $I \cap K[x_i] \ne \emptyset$, i.e. there exists
a $p = x_i^d + \ldots + a_1 x_i + a_0 \in I$ where $d \in \NN$ and
$a_0, \ldots, a_{d-1} \in K$. Let $p = h_1 f_1 + \ldots + h_s f_s$ where
$h_j \in P$ where $1 \leq j \leq s$. Since each execution of (T8) strictly
enlarges $\UU$, the relation $h_j f_j \in \langle \UU \rangle_K$ must hold
for all $j \in \{ 1, \ldots, s \}$ after finitely many iterations which yields
$p \in \langle \VV \rangle_K$. Now let $p = x_i^d + \ldots + a_1 x_i + a_0 =
b_1 v_1 + \ldots + b_r v_r$ where $b_1, \ldots, b_r \in K \setminus \{ 0 \}$ and
$v_1, \ldots, v_r \in \VV$. Then $x_i^d = \MT_\VV(v_j)$ for a
$j \in \{ 1, \ldots, r \}$: Assume for a contradiction $x_i^d \ne \MT_\VV(v_j)$
for all $j \in \{ 1, \ldots, r \}$. By construction, $\MT_\VV(v_j) \notin
\Supp(v_k)$ where $j, k \in \{ 1, \ldots, r \}$ and $j \ne k$. This yields
$\MT_\VV(v_j) \in \Supp(p) \setminus \{ x^d \}$. Thus $\deg(v_j) =
\deg(\MT_\VV(v_j)) < d$ where $1 \leq j \leq r$ in contradiction to $\deg(p) =
\deg(b_1 v_1 + \ldots + b_r v_r) = d$. Since no marked term will vanish in a
future iteration of loop (T3)-(T8) we conclude that for all $i \in \{ 1,
\ldots, n \}$ there exist powers $d_i \in \NN$ s.t. $x_1^{d_1}, \ldots,
x_n^{d_n}$ appear as marked terms of polynomials of $\VV$ in some iteration.
The order ideals $\OO$ constructed in (T7) contain no multiples of marked
terms. Thus the growth of the order ideals $\OO$ is bounded since
$\TT^n \setminus \{ t \cdot x_i^{d_i}\ |\ t \in \TT^n, 1 \leq i \leq n \}$
is finite. Therefore after finitely many iterations the relation
$\partial \OO \subseteq \UU$ in (T8) must hold and the loop terminates.

Finally, assume that (T9) is being executed: By proposition
\ref{PrpExistenceOfBB} where $\tilde{I} = \langle \VV \rangle_K$ the
$\OO$-border basis of $I$ exists. By construction,
$\Supp(v) \setminus \{ \MT_\VV(v) \} \subseteq \OO$ for all $v \in \VV$.
Thus the selected $g_1, \ldots, g_\nu$ form a $\OO$-border prebasis of $I$.
Proposition \ref{PrpExAndUniqOfBB} now implies that $\{ g_1, \ldots, g_\nu \}$
is the $\OO$-border basis of $I$.
\end{proof}

\begin{rem}
Let $\FF' = \{ f'_1, \ldots, f'_{s'} \} \subset P \setminus \{ 0 \}$ be a set of
marked polynomials such that $\langle \FF' \rangle_P$ is zero-dimensional.
Using the Marked Interreduction, $\FF'$ can
easily be transformed into a set $\FF = \{ f_1, \ldots, f_{s} \} \subset P
\setminus \{ 0 \}$ that fulfills the precondition $\MT_{\FF}(f_i) \notin
\Supp(f_j)$ where $1 \leq i, j \leq s$ and $i \ne j$ of the Border Basis
Algorithm with Term Marking Strategy.
\end{rem}

\begin{rem}
The following examples shows that this variant of the Improved Border Basis
Algorithm indeed allows the computation of border bases that cannot be computed
by the standard (Improved) Border Basis Algorithm.
\end{rem}

\begin{exa}
Let $K = \QQ$ and $P = \QQ[x,y]$. Let $\FF = \{ f_1, f_2, f_3 \} = \{ x^2 + xy -
\frac{1}{2}y^2 - x - \frac{1}{2}y, y^3 - y, xy^2 - xy \}$. The ideal $I =
\langle f_1, f_2, f_3 \rangle$ is equal to the vanishing ideal of
the five points $(-1, 1), (1,1), (0,0), (1,0), (0,-1) \in \mathbb{A}^2(\QQ)$
of example \ref{ExaStdBBAProblem}. We apply algorithm \ref{PrpBBAWithTMS} to
$\FF$.
\begin{enumerate}
  \item[(T1)] The computing universe $\UU$ is equal to
  $\{ 1, x, y, x^2, xy, y^2, xy^2, y^3 \}$.
  \item[(T2)] We may only choose the marked term of $f_1$ freely because the
  degree restriction forces us to mark $y^3$ in $f_2$ and $xy^2$ in $f_3$. Since
  $\deg(f_1) = 2$, one of the terms $x^2, xy$ or $y^2$ can be marked in $f_1$.
  We choose $xy$.\\
  We apply the Marked Interreduction to $\FF$ and choose the
  numeration $t_1 = y^3,  
  t_2 = xy^2, t_3 = xy, t_4 = x^2, t_5 = y^2, t_6 = x$, and $t_7 = y$. We obtain
  $f'_1 = xy + x^2 - \frac{1}{2}y^2 - x - \frac{1}{2}y$ with marked term
  $xy, f'_2 = y^3 - y$ with marked $y^3$, and $f'_3 = xy^2 + x^2 -
  \frac{1}{2}y^2 - x - \frac{1}{2}y$ with marked term $xy^2$. We let
  $\VV = \{ f'_1, f'_2, f'_3 \}$.
  \item[(T3)] a) We mark $x^2y$ in $x v_1$, $xy^2$ in $y v_1$, $xy^3$ in $x
  v_2$, $y^4$ in $y v_2$, $x^2y^2$ in $x v_3$, and $xy^3$ in $y v_3$.\\
  b) We let $\VV = \{ v_1, v_2, v_3 \} = \{ f_1, f_2, f_3 \}$,
  and $\VV^+ \setminus \VV = \{ v'_1, v'_2, v'_3, v'_4, v'_5,$\linebreak
  $v'_6 \}
  = \{ x v_1, y v_1, x v_2, y v_2, x v_3, y v_3 \}$. We apply the Marked
  Interreduction to $\{ v_1, v_2, v_3, v'_1, v'_2, v'_3, v'_4, v'_5, v'_6 \}$
  and choose the numeration $t_1 = xy^3, t_2 = y^4, t_3 = x^2y^2, t_4 = y^3,
  t_5 = xy^2, t_6 = x^2y, t_7 = xy , t_8 = x^3, t_9 = x^2, t_{10} = y^2, t_{11}
  = x$, and $t_{12} = y$ and obtain $f'_1 = xy + x^2 - \frac{1}{2}y^2 - x -
  \frac{1}{2}y$ with marked term $xy$, $f'_2 = y^3 - y$ with marked term $y^3$,
  $f'_3 = xy^2 + x^2 - \frac{1}{2}y^2 - x - \frac{1}{2}y$ with marked term
  $xy^2$, $f'_4 = x^2y - \frac{1}{2}y^2 - \frac{1}{2}y$ with marked term
  $x^2y$, $f'_5 = x^3 - x$ with marked term $x^3$, $f'_6 = xy^3 + x^2 -
  \frac{1}{2}y^2 - x - \frac{1}{2}y$ with marked term $xy^3$, $f'_7 = y^4 -
  y^2$ with marked term $y^4$, and $f'_8 = x^2y^2 - \frac{1}{2}y^2 -
  \frac{1}{2}y$ with marked term $x^2y^2$.\\
  c) We let $\VV' = \{ f'_1, f'_2, f'_3 \}$ and $\WW' = \{ f'_4, f'_5, f'_6,
  f'_7, f'_8 \}$.
  \item[(T4)] $\WW = \emptyset$ because of $x^2y, x^3, xy^3, y^4, x^2y^2 \notin
  \UU$.
  \item[(T5)] We continue with (T6).
  \item[(T6)] We continue with (T7).
  \item[(T7)] $\OO = \{ 1, x, y, x^2, xy, y^2, xy^2, y^3 \}
  \setminus \{ xy, y^3, xy^2 \} = \{ 1, x, y, x^2, y^2 \}$.
  \item[(T8)] $\partial \OO = \{ x^3, x^2y, xy, xy^2, y^3 \} \nsubseteq \UU$.
  We enlarge $\UU$ to $\UU^+ = \{ 1, x, y, x^2, xy,$\linebreak
  $y^2, x^3, x^2y, xy^2, y^3, x^2y^2, xy^3, y^4 \}$ and repeat (T3)-(T8).
  \item[(T3)] We obtain the same results as in (T3) above.
  \item[(T4)] $\WW = \{ x^2y - \frac{1}{2}y^2 - \frac{1}{2}y, x^3 - x, xy^3
  + x^2 - \frac{1}{2}y^2 - x - \frac{1}{2}y, y^4 - y^2, x^2y^2 - \frac{1}{2}y^2
  - \frac{1}{2}y \} = \WW'$.
  \item[(T5)] It is not necessary to enlarge $\UU$ thus we continue with (T6).
  \item[(T6)] Since $\WW \ne \emptyset$ we replace $\VV$ by $\{ xy +
  x^2 - \frac{1}{2}y^2 - x - \frac{1}{2}y, y^3 - y, xy^2 + x^2 - \frac{1}{2}y^2
  - x - \frac{1}{2}y, x^2y - \frac{1}{2}y^2 - \frac{1}{2}y, x^3 - x, xy^3 + x^2
  - \frac{1}{2}y^2 - x - \frac{1}{2}y, y^4 - y^2, x^2y^2 - \frac{1}{2}y^2
  - \frac{1}{2}y \}$ and repeat (T3)-(T6). This will lead to $\WW = \emptyset$
  in (T6) and we leave the loop (T3)-(T6).
  \item[(T7)] $\OO = \{ 1, x, y, x^2, xy, y^2, x^3, x^2y, xy^2, y^3, x^2y^2,
  xy^3, y^4 \} \setminus \{ xy, y^3, xy^2, x^2y,$\linebreak
  $xy^3, y^4, x^2y^2, x^3 \} = \{ 1, x, y, x^2, y^2 \}$.
  \item[(T8)] $\partial \OO = \{ x^3, x^2y, xy, xy^2, y^3 \} \subseteq \UU$ and
  we leave the loop (T3)-(T8).
  \item[(T9)] We select $g_1 = x^3 - x$, $g_2 = x^2y
  - \frac{1}{2}y^2 - \frac{1}{2}y$, $g_3 = xy + x^2 - \frac{1}{2}y^2 - x
  - \frac{1}{2}y$, $g_4 = xy^2 + x^2 - \frac{1}{2}y^2 - x - \frac{1}{2}y$, and
  $g_5 = y^3 - y$ with marked terms $x^3$, $x^2y$, $xy$, $xy^2$, and $y^3$ from
  $\VV$ and output $g_1, \ldots, g_5$ as well as $\OO = \{ 1, x, y, x^2, y^2
  \}$.
\end{enumerate}
Finally, we note that we obtained the $\OO$-border basis of the
ideal $I$ of example \ref{ExaStdBBAProblem} which cannot be computed by the
(Improved) Border Basis Algorithm.
\end{exa}

\begin{exa}
Let $K = \QQ, P = \QQ[x,y,z]$, and $\FF = \{ f_1, f_2, f_3 \} = \{  x^3 +
x - 1,$\linebreak
$y^2 + yz + z^2 + xz + x^2, z^3 + x^2z + xyz - y \}$ be a set
of marked polynomials where $\MT_\FF(f_1) = x^3, \MT_\FF(f_2) = xz$, and
$\MT_\FF(f_3) = x^2z$. The ideal $I = \langle f_1, f_2, f_3 \rangle$ is
zero-dimensional and $\dim_\QQ(P/I) = 18$. We apply algorithm
\ref{PrpBBAWithTMS} to $\FF$ by using the \DegLex term ordering on
$\TT^3$ to enumerate the terms of the sets $\{ t_1, \ldots, t_l \}$ each time
the Marked Interreduction is executed. Then the computation yields an
$\OO$-border basis of $I$ where $\OO = \{ 1, z, y, x, z^2, yz, y^2, xy, x^2,
z^3, yz^2, y^2z, y^3,$\linebreak
$x^2y, z^4, yz^3, y^2z^2, z^5 \}$ and the marked
terms of the 24 border basis polynomials are $xz, x^2z, x^3, xyz, xy^2, xz^2,
x^2yz, x^3y, xy^3, x^2y^2, xy^2z, xyz^2, y^4, y^3z, xz^3, xyz^3,$\linebreak
$xy^2z^2, y^3z^2, y^2z^3, yz^4, xz^4, xz^5, yz^5,$ and $z^6$. Since $\OO$
contains $x^2, y^2$ and $z^2$ and $f_2 = y^2 + yz + z^2 + xz + x^2 \in I$ the
intersection $\LTs(I) \cap \OO$ can never be empty for any term ordering
$\sigma$ on $\TT^3$. Thus the computed border basis cannot arise from a term
ordering on $\TT^3$.
\end{exa}

\begin{rem}
An implementation of the Border Basis Algorithm with Term Marking Strategy is
available under the (function) name \texttt{BB.BBasisForMP} in the computer
algebra system \apcocoa (cf. \cite{ApCoCoA, CoCoA}).
\end{rem}

\begin{rem}\label{RemDCTOAlwaysWorks}
Let $\sigma$ be a degree compatible term ordering on $\TT^n$. As noted before
in example \ref{ExaDegCompTOIsMarking} $\LTs$ is a marking for any $\FF \subseteq
P \setminus \{ 0 \}$. The input set $\FF$ of the Border Basis Algorithm with
Term Marking Strategy can thus be considered being marked by $\LTs$. Now assume
that each time the Marked Interreduction is applied to a set $\{ f_1, \ldots,
f_s \} \subset P \setminus \{ 0 \}$ during the execution of the Border Basis
Algorithm with Term Marking Strategy the set
$\mathop{\bigcup}_{i=1}^s \Supp(f_i) = \{ t_1, \ldots, t_l \}$ is enumerated in
such a way that $t_1 >_\sigma \ldots >_\sigma t_l$ holds. Then the resulting
output set of polynomials $\FF'$ will be marked accordingly to $\LTs$, i.e.
each marked term of a polynomial of $\FF'$ will be the leading term of this
polynomial with respect to $\sigma$. This means that in this case the output of
the Border Basis Algorithm with Term Marking Strategy will be the same as the
output of the Improved Border Basis Algorithm \cite[Proposition
21]{KKCompBBases}, namely the $\OOsI$-border basis of $\langle \FF \rangle_P$.
\end{rem}

\begin{rem}\label{RemAlgoCanStopInT7}
The algorithm can indeed encounter the situation that in step (T7) the set
$\OO$ is not an order ideal. In the following we briefly discuss to reasons why
the algorithm cannot produce a border basis for every given input. For this, we
assume that the algorithm reaches step (T7) and let $\tilde{\OO} = \UU
\setminus \{ t \cdot \MT_\VV(v)\ |\ v \in \VV, t \in \TT^n \}$ and $\OO = \UU
\setminus \{ \MT_\VV(v)\ |\ v \in \VV \}$. Note that $\tilde{\OO}$ always forms
an order ideal whereas $\OO$ is an order ideal if and only if $\OO =
\tilde{\OO}$.
\begin{enumerate}
  \item[a)] One reason for the algorithm to terminate in step (T7) is that the
  order ideal $\tilde{\OO}$ is too small to satisfy $|\tilde{\OO}| =
  \dim_K(P/I)$. In this case $\OO \setminus \tilde{\OO}$ will contain at least
  one term that does not appear as a marked term of any polynomial in $\VV$ and
  $\OO$ will not form an order ideal. An example for this
  case is given in \ref{ExaTooSmallOI} below.
  \item[b)] Another reason why the algorithm terminates in step (T7) is the
  scenario when $\tilde{O}$ outlines an order ideal for which no border basis
  of $I$ exists.
\end{enumerate}
\end{rem}

\begin{exa}\label{ExaTooSmallOI}
Let $P = \QQ[x, y]$ and $\FF = \{ f_1, f_2, f_3 \} = \{ x^3, y^3, x^2 + xy
+ y^2 \}$. The ideal $I = \langle \FF \rangle$ is zero-dimensional and
$\dim_{\QQ}(P/I) = 6$. We apply the Border Basis Algorithm with Term Marking
Strategy to $\FF$ and choose to mark the terms $x^3$ in $f_1$, $y^3$ in $f_2$,
and $xy$ in $f_3$ in step (T2). Then, already at the beginning of step (T3),
the set $\tilde{\OO} = \UU \setminus \{ t \cdot \MT_\VV(v)\ |\ v \in \VV, t \in
\TT^n \} = \{ 1, x, y, x^2, y^2 \}$ constitutes an order ideal which is too
small to support a border basis of $I$. As none of the marked terms of $\VV$
will vanish during the following computations the set $\tilde{\OO}$ may only
shrink further (which it does not do in this example) but cannot become larger.
Eventually the algorithm will produce the set $\OO = \{ 1, x, y, x^2, y^2, xy^2
\}$ in step (T7) which is apparently not an order ideal. Here $\OO \setminus
\tilde{\OO} = \{ xy^2 \}$ reveals which term of $\OO$ is a multiple of a marked
term of a polynomial in $\VV$.
\end{exa}

\begin{rem}\label{RemBBAWithTMSBacktracking}
The algorithm can be equipped with a backtracking strategy to exhaustively check
if a given marking of the input polynomials allows the computation of a border
basis. For this to work, the additional book keeping functionality described in
remark \ref{RemMIBacktracking} must be included in the Marked Interreduction. If
it is then detected in step (T7) that the set $\OO$ is not an order ideal, the
algorithm can successively go backwards through the log entries of the Marked
Interreduction computations, choose different pivot indices $p_k$ in step (6),
and redo the computation from those points on. If the log entries of the Marked
Interreduction are exhausted it is then clear that the given marking of the
input polynomials does not allow the computation of a border basis.
\end{rem}

\begin{rem}
The outcome of the application of the Border Basis Algorithm with Term Marking
Strategy highly depends on the input polynomials and the given marking of them.
It is clear that if the support of the given input polynomials does not allow to
create a marking of the polynomials where the marked terms are terms that can
never be a leading term with respect to any term ordering then it is less likely
that the computation will yield a border basis that does not arise from a term
ordering. On the other hand, in this situation it is also less likely that the
algorithm stops in step (T7), especially if the backtracking described in remark
\ref{RemBBAWithTMSBacktracking} is used. From an experimental point of view it
is thus more fruitful to apply the algorithm to input polynomials that can be
marked in many different ways.
\end{rem}

\bigskip

\section{Relation to other Border Basis Algorithms}\label{SecRelToOtherAlgos}
Let $I \subset P$ be a zero-dimensional ideal. In
\cite{MTGenNormFormAndPolySysSolv}, Mourrain and Tr\'{e}buchet introduced
a very general algorithm which allows the computation of a quotient basis
$\mathcal{B}$ of the $K$-vector space $P/I$. In addition, a set of reducing
rules which allow projection onto $\langle \mathcal{B} \rangle_K$ along $I$ is
produced. In this algorithm a choice function refining a reducing graduation
$\gamma$ (cf. \cite[Definition 2.7]{MTGenNormFormAndPolySysSolv}) is used to
determine the resulting basis $\mathcal{B}$. For certain choices of $\gamma$ the
output $\mathcal{B}$ of this algorithm is an order ideal and the set of
reducing rules is a $\mathcal{B}$-border basis of $I$. We observe the following
fundamental difference between the algorithm of Mourrain and Tr\'{e}buchet and
algorithm \ref{PrpBBAWithTMS}: The set $\mathcal{B}$ in the algorithm of
Mourrain and Tr\'{e}buchet may grow and shrink as needed during the
computation. In contrast to this the set $\OO$ in algorithm \ref{PrpBBAWithTMS}
may only shrink in each iteration of step $(T7)$. The latter behaviour ensures
that no marked term of any marked polynomial in the initial input set $\FF$ can
appear in the order ideal $\OO$.

\smallskip

Let $I$ be generated by a finite set of polynomials $\FF \subset P$. Braun and
Pokutta presented another more general algorithm to compute an $\OO$-border
basis of $I$ for an order ideal $\OO$ in \cite{BPBBasesAndOIPolyhedralChar}. In
contrast to algorithm \ref{PrpBBAWithTMS} not only one specific $\OO$-border
basis is computed but instead an $L$-stable span of $\langle \FF \rangle_K$ (cf.
\cite[Definition 10]{KKCompBBases}) where $L = \langle \TT^n_{\leq d} \rangle_K
= \langle \{ t \in \TT^n\ |\ \deg(t) \leq d \} \rangle_K$ for some $d \in \NN$
is produced which contains all possible $\OO$-border bases of $I$. After
choosing an admissible order ideal $\OO$ the polynomials of the $\OO$-border
basis are then selected from this $K$-vector space in the last step of the
algorithm by a basis transformation. Due to the dependency on the shape and the
marking of the input polynomials, the application of the Border Basis Algorithm
with Term Marking Strategy always results in a very specific computation which
means that it does not necessarily produce such a $K$-vector space during its
execution. This behavior can result in a faster running time at the expense of
generality as shown in the following example.

\begin{exa}
Let $K = \QQ, P = \QQ[x,y], \FF = \{ x^2 - y, x^2y + y^3 - x - y \}$, and $I =
\langle \FF \rangle$. Let $\sigma$ denote the standard \Lex term
ordering on $\TT^2$ where $x >_\sigma y$ and let $\tau$ denote the
\Lex term ordering on $\TT^2$ where $x <_\tau y$. Since the reduced
$\sigma$- and $\tau$-Gr\"{o}bner bases of $I$ are $\{ x - y^3 - y^2 + y, y^6 +
2y^5 - y^4 - 2y^3 + y^2 - y \}$ and $\{ y - x^2, x^6 + x^4 - x^2 - x \}$,
respectively, algorithm $4.3$ of \cite{BPBBasesAndOIPolyhedralChar} must produce
the computing universe $L = \TT^2_{\leq 6}$ for a stable span computation during
its execution. Because of the shape of the polynomials of $\FF$ the Border Basis
Algorithm with Term Marking Strategy cannot be used to compute neither the
$\OOsI$-border basis nor the $\mathcal{O}_\tau\{I\}$-border basis of $I$. It can
be shown that regardless of the chosen marking of $\FF$ and the
chosen enumeration of terms for the application of the Marked Interreduction
the result of the computation in this case will always be the same $\OO$-border
basis of $I$. But in contrast to the computing universe $L = \TT^2_{\leq 6}$ as
in the case of algorithm $4.3$ of \cite{BPBBasesAndOIPolyhedralChar} the final
computing universe during the execution of the Border Basis Algorithm with Term
Marking Strategy is the order ideal $\UU \subset \TT^2_{\leq 4}$ spanned by
$y^4, xy^3, x^2y^2,$ and $x^3y$.
\end{exa}

\bigbreak


\end{document}